\definecolor{numbering}{rgb}{0.7,0,0}
\definecolor{email}{RGB}{073,103,141}
\definecolor{citations}{RGB}{034,113,179}
\newcommand{\eps}{\varepsilon}
\renewcommand{\Re}{\mathrm{Re}}
\renewcommand{\Im}{\mathrm{Im}}
\newcommand{\pd}[2]{\frac{\partial #1}{\partial #2}}
\renewcommand{\d}{\mathrm{d}}
\renewcommand{\i}{\mathrm{i}}
\newcommand{\R}{\mathbb{R}}
\newcommand{\C}{\mathbb{C}}
\newcommand{\ov}{\overline}
\renewcommand{\j}{\bar\jmath}
\numberwithin{equation}{section}
\newtheorem{theorem}{Theorem}[section]
\newtheorem{proposition}{Proposition}[section]
\newtheorem{corollary}{Corollary}[section]
\theoremstyle{remark}
\newtheorem{remark}{Remark}[section]
\newtheorem{example}{Example}[section]
\date{April 2, 2014}
\thanks{The author was supported by Forschungsinstitut f\"ur Mathematik (FIM) at ETH Z\"urich and
Schweizerischer Nationalfonds SNF via the postdoctoral fellowship PA00P2\_142053. The author is grateful to Friedrich-Schiller-Universit\"at Jena for financial support for several trips to Jena where a part of the writing for this article took place.}
\author[T.~Mettler]{Thomas Mettler}
\address{Department of Mathematics, ETH Z\"urich, Switzerland}
\email{thomas.mettler@math.ethz.ch}
\title[On K\"ahler metrisability]{On K\"ahler metrisability of two-dimensional complex projective structures}
\begin{document}
\begin{abstract}
We derive necessary conditions for a com\-plex pro\-jec\-tive structure on a complex surface to arise via the Levi-Civita connection of a (pseudo-)K\"ahler metric. Furthermore we show that the (pseudo-)K\"ahler metrics defined on some domain in the projective plane which are compatible with the standard com\-plex pro\-jec\-tive structure are in one-to-one correspondence with the hermitian forms on $\C^3$ whose rank is at least two. This is achieved by prolonging the relevant finite-type first order linear differential system to closed form. Along the way we derive the com\-plex pro\-jec\-tive Weyl and Liouville curvature using the language of Cartan geometries.
\end{abstract}
\maketitle

\section{Introduction}
Recall that an equivalence class of affine torsion-free connections on the tangent bundle of a smooth manifold $N$ is called a (real) projective structure~\cite{MR1504846,51.0569.03,48.0844.04}. Two connections $\nabla$ and $\nabla^{\prime}$ are \textit{projectively equivalent} if they share the same unparametrised geodesics. This condition is equivalent to $\nabla$ and $\nabla^{\prime}$ inducing the same parallel transport on the projectivised tangent bundle $\mathbb{P}TN$. 
 
It is a natural task to (locally) characterise the projective structures arising via the Levi-Civita connection of a (pseudo-)Riemannian metric. R. Liouville~\cite{21.0317.01} made the crucial observation that the Riemannian metrics on a surface whose Levi-Civita connection belongs to a given projective class precisely correspond to nondegenerate solutions of a certain projectively invariant finite-type linear system of partial differential equations. In~\cite{MR2581355} Bryant, Eastwood and Dunajski used Liouville's observation to solve the two-dimensional version of the Riemannian metrisability problem. In another direction it was shown in~\cite{MR3144212} that on a surface locally every affine torsion-free connection is projectively equivalent to a conformal connection (see also~\cite{MR3043749}). Local existence of a connection with skew-symmetric Ricci tensor in a given projective class was investigated in~\cite{MR3144367} (see also~\cite{arXiv:1303.4912} for a connection to Veronese webs). Liouville's result generalises to higher dimensions~\cite{MR1384327} and the corresponding finite-type differential system was prolonged to closed form in~\cite{MR2384718,MR1384327}. Several necessary conditions for Riemann metrisability of a projective structure in dimensions larger than two were given in~\cite{MR2876789}. See also~\cite{MR3158041,MR3159950} for the role of Einstein metrics in projective geometry.

Now let $M$ be a complex manifold of complex dimension $d>1$ with integrable almost complex structure map $J$. Two affine torsion-free connections $\nabla$ and $\nabla^{\prime}$ on $TM$ which preserve $J$ are called \textit{com\-plex pro\-jec\-tively equivalent} if they share the same \textit{generalised geodesics} (for the notion of a curved complex projective structure on Riemann surfaces see~\cite{MR1656822}). A \textit{generalised geodesic} is a smoothly immersed curve $\gamma \subset M$ with the property that the $2$-plane spanned by $\dot{\gamma}$ and $J \dot\gamma$ is parallel along $\gamma$. Complex projective geometry was introduced by Otsuki and Tashiro~\cite{MR0066024,MR0087181}. Background on the history of complex projective geometry and its recently discovered connection to Hamiltonian $2$-forms (see~\cite{MR2228318} and references therein) may be found in~\cite{MR2998672}. 

In the complex setting it is natural to study the \textit{K\"ahler metrisability problem}, i.e.~try to (locally) characterise the com\-plex pro\-jec\-tive structures which arise via the Levi-Civita connection of a (pseudo-)K\"ahler metric. Similar to the real case, the K\"ahler metrics whose Levi-Civita connection belongs to a given complex projective class precisely correspond to nondegenerate solutions of a certain complex projectively invariant finite-type linear system of partial differential equations~\cite{MR492674,MR2998672,MR1619720}. 

In this note we prolong the relevant differential system to closed form in the surface case. In doing so we obtain necessary conditions for K\"ahler metrisability of a com\-plex pro\-jec\-tive structure $[\nabla]$ on a complex surface and show in particular that the generic com\-plex pro\-jec\-tive structure is not K\"ahler metrisable. Furthermore we show that the space of K\"ahler metrics compatible with a given com\-plex pro\-jec\-tive structure is algebraically constrained by the com\-plex pro\-jec\-tive Weyl curvature of $[\nabla]$. We also show that the (pseudo-)K\"ahler metrics defined on some domain in $\mathbb{CP}^2$ which are compatible with the standard com\-plex pro\-jec\-tive structure are in one-to-one correspondence with the hermitian forms on $\C^3$ whose rank is at least two. A result whose real counterpart is a well-known classical fact. This note concerns itself with the complex 2-dimensional case, but there are obvious higher dimensional generalisations that can be treated with the same techniques.  

The reader should be aware that the results presented here can also be obtained by using the elegant and powerful theory of Bernstein--Gelfand--Gelfand (BGG) sequences developed by \v{C}ap, Slov\'ak and Sou\v{c}ek~\cite{MR1847589} (see also the article of Calderbank and Diemer~\cite{MR1856258}). In particular, the prolongation computed here is an example of a prolongation connection of a first BGG equation in parabolic geometry and may be derived using the techniques developed in~\cite{MR2984590}.  

This note aims at providing an intermediate analysis between the abstract BGG machinery and pure local coordinate computations. This is achieved by carrying out the computations on the parabolic Cartan geometry of a com\-plex pro\-jec\-tive surface.

\section{Com\-plex pro\-jec\-tive surfaces}

\subsection{Definitions}

Let $M$ be a complex $2$-manifold with integrable almost complex structure map $J$ and $\nabla$ an affine torsion-free connection on $TM$. We call $\nabla$ \textit{complex-linear} if $\nabla J=0$. A \textit{generalised geodesic} for $\nabla$ is a smoothly immersed curve $\gamma \subset M$ with the property that the $2$-plane spanned by $\dot{\gamma}$ and $J \dot\gamma$ is parallel along $\gamma$, i.e.~$\gamma$ satisfies the reparametrisation invariant condition
\begin{equation}\label{hplanarcurve}
\nabla_{\dot\gamma}\dot\gamma\wedge \dot\gamma\wedge J\dot\gamma=0.
\end{equation}
We call two complex linear torsion-free connections $\nabla$ and $\nabla^{\prime}$ on $M$ \textit{com\-plex pro\-jec\-tively equivalent}, if they have the same generalised geodesics. An equivalence class of com\-plex pro\-jec\-tively equivalent connections is called a \textit{com\-plex pro\-jec\-tive structure} and will be denoted by $[\nabla]$. A complex $2$-manifold equipped with a com\-plex pro\-jec\-tive structure will be called a \textit{com\-plex pro\-jec\-tive surface}. 

\begin{remark}
What we here call a complex projective structure was originally called a \textit{holomorphic projective structure} by Tashiro~\cite{MR0087181} and others. Once it was realised that in general complex projective structures are not holomorphic in any reasonable way, the name \textit{h-projective structure} was used -- and is still so -- see for instance~\cite{MR2948791,MR2729086,MR2998672}. Furthermore, what we here call generalised geodesics are called \textit{h-planar curves} in the literature using the name h-projective. One might argue that the notion of a complex projective structure can be confused with well-established notions in algebraic geometry. For this reason complex projective is sometimes also abbreviated to c-projective (see for instance~\cite{arxiv:1311.0517}).  
\end{remark}

Extending $\nabla$ to the complexified tangent bundle $T^{\C}M \to M$, it follows from the complex linearity of $\nabla$ that for every local holomorphic coordinate system $z=(z^i) : U \to \C^2$ on $M$ there exist unique complex-valued functions $\Gamma^i_{jk}$ on $U$, so that
$$
\nabla_{\partial_{z^j}}\partial_{z^k}=\Gamma^i_{jk}\partial_{z^i}.
$$
We call the functions $\Gamma^i_{jk}$ the \textit{complex Christoffel symbols} of $\nabla$. Tashiro showed~\cite{MR0087181} that two torsion-free complex linear connections $\nabla$ and $\nabla^{\prime}$ on $M$ are com\-plex pro\-jec\-tively equivalent if and only if there exists a $(1,\! 0)$-form $\beta \in \Omega^{1,0}(M,\R)$ so that
\begin{equation}\label{hprojequiv}
\nabla^{\prime}_ZW-\nabla_ZW=\beta(Z)W+\beta(W)Z
\end{equation}
for all $(1,\! 0)$ vector fields $Z,W \in \Gamma(T^{1,0}M)$. In analogy to the real case one can use \eqref{hprojequiv} to show that $\nabla$ and $\nabla^{\prime}$ are complex projectively equivalent if and only if they induce the same parallel transport on the complex projectivised tangent bundle $\mathbb{P}T^{1,0}M$. 

Writing $\Gamma^i_{jk}$ and $\hat{\Gamma}^i_{jk}$ for the complex Christoffel symbols of $\nabla$ and $\nabla^{\prime}$ with respect to some holomorphic coordinates $z=(z^i)$ and $\beta=\beta_i \d z^i$, equation \eqref{hprojequiv} translates to
\begin{equation}\label{hprojequivcoord}
\hat{\Gamma}^i_{jk}=\Gamma^i_{jk}+\delta^i_j\beta_k+\delta^i_k\beta_j. 
\end{equation}
Note that formally equation \eqref{hprojequivcoord} is identical to the equation relating two real projectively equivalent connections on a real manifold. In particular, similarly to the real case (see~\cite{MR1504846,51.0569.03}), the functions
\begin{equation}\label{cplxprojinv}
\Pi^i_{jk}=\Gamma^i_{jk}-\frac{1}{3}\left(\Gamma^l_{lj}\delta^i_k+\Gamma^l_{lk}\delta^i_j\right)
\end{equation}
are com\-plex pro\-jec\-tively invariant in the sense that they only depend on the coordinates $z$. Moreover locally $[\nabla]$ can be recovered from the functions $\Pi^i_{jk}$ and two torsion-free complex linear connections are com\-plex pro\-jec\-tively equivalent if and only if they give rise to the same functions $\Pi^i_{jk}$ in every holomorphic coordinate system. 

A com\-plex pro\-jec\-tive structure $[\nabla]$ is called \textit{holomorphic} if the functions $\Pi^i_{jk}$ are holomorphic in every holomorphic coordinate system. Gunning~\cite{MR505691} obtained relations on characteristic classes of complex manifolds carrying holomorphic projective structures. The condition on a manifold to carry a holomorphic projective structure is particularly restrictive in the case of compact complex surfaces. See also the beautiful twistorial interpretation of holomorphic projective surfaces by Hitchin~\cite{MR699802} and Remark~\ref{remcplxproj}.  

\subsection{Cartan geometry}
A com\-plex pro\-jec\-tive structure admits a description in terms of a \textit{normal Cartan geometry} modelled on complex projective space $\mathbb{CP}^n$, following the work of Ochiai~\cite{MR0284936}: see~~\cite{MR2591680} and~\cite{MR0500679}. The reader unfamiliar with Cartan geometries may consult~\cite{MR2532439} for a modern introduction. We will restrict to the construction in the complex two-dimensional case. 

Let $\mathrm{PSL}(3,\C)$ act on $\mathbb{CP}^2$ from the left in the obvious way and let $P$ denote the stabiliser subgroup of the element $[1,0,0]^t \in \mathbb{CP}^2$. We have:
\begin{theorem}\label{cartangeom}
Suppose $(M,J,[\nabla])$ is a com\-plex pro\-jec\-tive surface. Then there exists (up to isomorphism) a unique real Cartan geometry $(\pi : B \to M,\theta)$ of type $(\mathrm{PSL}(3,\C),P)$ such that for every local holomorphic coordinate system $z=(z^i) : U \to \C^2$, there exists a unique section $\sigma_z : U \to B$ satisfying
\begin{equation}\label{pullbackconn}
(\sigma_z)^*\theta=\left(\begin{array}{ccc} 0 & \phi^0_1 & \phi^0_2\\ \phi^1_0 & \phi^1_1 & \phi^1_2 \\ \phi^2_0 & \phi^2_1 & \phi^2_2\end{array}\right)
\end{equation}
where $$
\phi^i_0=\d z^i, \quad \text{and} \quad \phi^i_j=\Pi^i_{jk}\d z^k, \quad \text{and} \quad \phi^0_i=\Pi_{ik}\d z^k,  
$$
with
$$
\Pi_{ij}=\Pi^k_{il}\Pi^l_{jk}-\frac{\partial \Pi^k_{ij}}{\partial z^k}
$$
and $\Pi^i_{jk}$ denote the com\-plex pro\-jec\-tive invariants with respect to $z^i$ defined in \eqref{cplxprojinv}. 
\end{theorem}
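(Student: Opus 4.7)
The plan is to construct $(B,\theta)$ locally from holomorphic coordinate charts, glue via transition functions valued in $P$, and invoke the standard normalisation of the Cartan connection to secure uniqueness. For each holomorphic chart $(U,z)$, I would set $B|_U=U\times P$, define $\theta$ on the image of the candidate section $\sigma_z:u\mapsto(u,e)$ by the matrix formula \eqref{pullbackconn}, and extend to all of $U\times P$ by demanding right $P$-equivariance $(R_g)^*\theta=\mathrm{Ad}(g^{-1})\theta$ together with reproduction of fundamental vector fields. A preliminary consistency check is that the matrix in \eqref{pullbackconn} genuinely takes values in $\mathfrak{sl}(3,\C)$: its trace $\phi^1_1+\phi^2_2=\Pi^i_{ik}\,\d z^k$ vanishes, and this is immediate from \eqref{cplxprojinv}.

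The principal step is gluing. For overlapping charts $z$ and $w$ with $w=f(z)$ holomorphic, one constructs a unique map $g_{zw}:U_z\cap U_w\to P$ with $\sigma_w=\sigma_z\cdot g_{zw}$ so that the pullbacks are related by
\[
\sigma_w^*\theta=\mathrm{Ad}(g_{zw}^{-1})\,\sigma_z^*\theta+g_{zw}^*\theta_{\mathrm{MC}},
\]
where $\theta_{\mathrm{MC}}$ denotes the Maurer--Cartan form on $P$. The block structure of $P$ assigns its upper-left scalar factor to a density weight, the middle $2\times 2$ block to the Jacobian $\partial w^i/\partial z^j$, and the top-row entries to a correction dictated by how the $\Pi^i_{jk}$ transform under holomorphic changes of coordinates. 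Since \eqref{hprojequivcoord} and the definition \eqref{cplxprojinv} are formally identical to their real projective counterparts, this transition computation reduces to the classical one for real projective Cartan geometries; one verifies that the resulting $g_{zw}$ really does land in $P$ and satisfies the cocycle condition on triple overlaps, thereby assembling the local trivialisations into a global principal $P$-bundle $\pi:B\to M$ with well-defined $\theta$. Note that even though the structure group is complex, the Cartan geometry is real because $\theta$ is defined using both $\d z^i$ and the $\Pi^i_{jk}$, which need not be holomorphic.

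Uniqueness is entangled with normality. The specific choice $\Pi_{ij}=\Pi^k_{il}\Pi^l_{jk}-\partial_{z^k}\Pi^k_{ij}$ for the bottom-row coefficients is precisely the prescription that forces the curvature $\Theta=\d\theta+\tfrac{1}{2}[\theta,\theta]$ into the Kostant-harmonic subspace of the relevant cochain complex, i.e.~makes $(B,\theta)$ normal in the parabolic-geometric sense. The standard theory, going back to Ochiai and available in \cite{MR2591680,MR2532439}, then gives that any two normal Cartan geometries of type $(\mathrm{PSL}(3,\C),P)$ inducing the same complex projective structure are isomorphic: compare their pullbacks via $\sigma_z$ in a common chart and propagate the identification along the fibres by $P$-equivariance.

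The hard part is the explicit verification of the transition cocycle. One has to show by direct computation that the coordinate transformation law of the $\Pi^i_{jk}$, combined with the coframe change $\d w^i=(\partial w^i/\partial z^j)\,\d z^j$ and the tensorial behaviour of $\Pi_{ij}$, assembles into a single $P$-valued gauge transformation relating $\sigma_z^*\theta$ and $\sigma_w^*\theta$. Everything else --- equivariant extension of $\theta$, the cocycle condition on triple overlaps, and normality implying uniqueness --- is either formal or a routine consequence of the general parabolic Cartan machinery.
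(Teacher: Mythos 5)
The paper offers no proof of Theorem~\ref{cartangeom}: it is quoted as a known consequence of Ochiai's work and the references \cite{MR2591680,MR0500679}, so there is no in-paper argument to compare against line by line. Your outline is precisely the standard construction that those references carry out --- local trivialisations $U\times P$ with $\theta$ prescribed on the canonical section and extended by equivariance, gluing via $P$-valued transition functions built from the holomorphic Jacobian, a determinant weight and a top-row correction encoding the transformation law of the $\Pi^i_{jk}$, and normality of the curvature pinning down the $\Pi_{ij}$ and hence uniqueness. Your preliminary trace check is correct ($\Pi^i_{ik}=0$ follows directly from \eqref{cplxprojinv}), and your observation that the chart-to-chart computation reduces formally to the real projective case is sound, since the transition maps are holomorphic and so the transformation law of the $\Pi^i_{jk}$ picks up no antiholomorphic derivative terms.

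That said, as a proof the proposal is an honest skeleton rather than a complete argument: the two steps you explicitly defer are where all of the content lives. First, the cocycle verification is not merely bookkeeping --- one must check that the inhomogeneous (Maurer--Cartan) term in the gauge transformation exactly absorbs the non-tensorial part of the transformation of $\Pi^i_{jk}$ \emph{and} of $\Pi_{ij}$, and that the resulting $g_{zw}$ is well defined in $P=\tilde P/\mathbb{Z}_3$ (the cube-root ambiguity in the determinant weight is exactly what the quotient removes). Second, you assert but do not verify that the specific choice $\Pi_{ij}=\Pi^k_{il}\Pi^l_{jk}-\partial_{z^k}\Pi^k_{ij}$ produces the normalized curvature; in the paper's conventions this amounts to checking that $\Theta=\d\theta+\theta\wedge\theta$ has vanishing first column and satisfies the trace condition $W^l_{li\bar\jmath}=0$ as in \eqref{firststruc}, which is a short but necessary computation. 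Finally, the theorem also asserts uniqueness of the section $\sigma_z$ itself, not only of $(B,\theta)$ up to isomorphism; this needs the (easy) observation that any two sections over $U$ whose pullbacks both have the form \eqref{pullbackconn} differ by a map $g:U\to P$ satisfying $\mathrm{Ad}(g^{-1})\phi+g^*\theta_{\mathrm{MC}}=\phi$, and that the semibasic components $\theta^i_0$ together with the zero in the upper-left corner force $g\equiv e$. None of these points invalidates your approach; they are simply the parts that a complete proof must supply.
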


\begin{remark}
Suppose $\varphi : (M,J,[\nabla]) \to (M^{\prime},J^{\prime},[\nabla]^{\prime})$ is a biholomorphism between complex projective surfaces identifying the complex projective structures, then there exists a diffeomorphism $\hat{\varphi} : B \to B^{\prime}$ which is a $P$-bundle map covering $\varphi$ and which satisfies $\hat{\varphi}^*\theta^{\prime}=\theta$. Conversely, every diffeomorphism $\Phi : B \to B^{\prime}$ that is a $P$-bundle map and satisfies $\Phi^*\theta^{\prime}=\theta$ is of the form $\Phi=\hat{\varphi}$ for a unique biholomorphism $\varphi : M \to M^{\prime}$ identifying the complex projective structures. 
\end{remark}

\begin{example}
Let $B=\mathrm{PSL}(3,\C)$ and let $\theta$ denote its Maurer-Cartan form. Setting $M=B/P\simeq \mathbb{CP}^2$ and $\pi : \mathrm{PSL}(3,\C) \to \mathbb{CP}^2$ the natural quotient projection, one obtains a complex projective structure on $\mathbb{CP}^2$ whose generalised geodesics are the smoothly immersed curves $\gamma \subset \mathbb{CP}^1$ where $\mathbb{CP}^1\subset \mathbb{CP}^2$ is any linearly embedded projective line. This is precisely the complex projective structure associated to the Levi-Civita connection of the Fubini-Study metric on $\mathbb{CP}^2$. This example satisfies $\d\theta+\theta\wedge\theta=0$ and is hence called \textit{flat}.  
\end{example}

Let $(\pi : B \to M,\theta)$ be the Cartan geometry of a complex projective structure $(J,[\nabla])$ on a simply-connected surface $M$ whose Cartan connection satisfies $\d \theta+\theta\wedge\theta=0$. Then there exists a local diffeomorphism $\Phi : B \to \mathrm{PSL}(3,\C)$ pulling back the Maurer-Cartan form of $\mathrm{PSL}(3,\C)$ to $\theta$ and consequently, a local biholomorphism $\varphi : M \to \mathbb{CP}^2$ identifying the projective structure on $M$ with the standard flat structure on $\mathbb{CP}^2$. 
\subsection{Bianchi-identities}
Theorem \ref{cartangeom} implies that the curvature form $\Theta=\d\theta+\theta\wedge\theta$ satisfies
\begin{equation}\label{firststruc}
\Theta=\d\theta+\theta\wedge\theta=\left(\begin{array}{ccc} 0 & \Theta^0_1 & \Theta^0_2\\ 0 & \Theta^1_1 & \Theta^1_2\\ 0 & \Theta^2_1 & \Theta^2_2\end{array}\right)
\end{equation}
with
$$
\Theta^0_i=L_i\theta^1_0\wedge\theta^2_0+K_{il\j}\theta^l_0\wedge\ov{\theta^{\jmath}_0}, \quad \Theta^i_k=W^i_{kl\j}\theta^l_0\wedge\ov{\theta^\jmath_0}
$$
for unique complex-valued functions $L_i,K_{il\j}$, and $W^i_{kl\j}$ on $B$ satisfying $W^l_{li\bar \jmath}=0$. Note that by construction, with respect to local holomorphic coordinates $z=(z^i)$, we obtain
\begin{equation}\label{coordpb}
(\sigma_z)^*W^i_{kl\j}=-\pd{\Pi^i_{kl}}{\bar z^j}.
\end{equation}

 Differentiation of the structure equations \eqref{firststruc} gives
$$
0=\d^2\theta^i_0=W^i_{lk\j}\theta^l_0\wedge\theta^k_0\wedge\ov{\theta^\jmath_0}, \quad \text{and} \quad 0=\d^2 \theta^0_0=K_{ik\j}\theta^i_0\wedge\theta^k_0\wedge\ov{\theta^\jmath_0}
$$
which yields the algebraic Bianchi-identities 
$$
W^i_{lk\j}=W^i_{kl\j}, \quad \text{and} \quad K_{ik\j }=K_{ki\j}.
$$

\subsubsection{Com\-plex pro\-jec\-tive Weyl curvature}

The identities $\d^2\theta^i_k=0$ yield 
$$
\kappa^i_{kl\j}\wedge\theta^l_0\wedge\ov{\theta^{\jmath}_0}=0
$$ with
$$
\kappa^i_{kl\j}=\d W^i_{kl\j}+W^i_{kl\j}\left(\theta^0_0+\ov{\theta^0_0}\right)+K_{kl\j}\theta^i_0-W^i_{ls\j}\theta^s_k-W^i_{ks\j}\theta^s_l+W^s_{kl\j}\theta^i_s-W^i_{kl\bar s}\ov{\theta^s_l}
$$
which implies that there exist complex-valued functions $W^i_{kl\j\bar s}$ and $W^i_{kl\j s}$ on $B$ satisfying 
$$
W^i_{kl\j\bar s}=W^i_{lk\j\bar s}=W^i_{kl\bar s\j}, \quad W^k_{kl\j\bar s}=W^k_{kl\j s}=0, \quad W^i_{kl\j s}=W^i_{lk\j s}
$$
such that
\begin{equation}\label{diffbianchi}
\d W^i_{kl\j}=\left(W^i_{kl\j s}+\delta^i_kK_{sl\j}+\delta^i_lK_{sk\j}-3\delta^i_sK_{kl\j}\right)\theta^s_0+W^i_{kl\j \bar s}\ov{\theta^s_0}+\varphi^i_{kl\j}
\end{equation}
where 
\begin{equation}\label{diffbianchi1a}
\varphi^i_{kl\j}=-W^i_{kl\j}\left(\theta^0_0+\ov{\theta^0_0}\right)+W^i_{ls\j}\theta^s_k+W^i_{ks\j}\theta^s_l-W^s_{kl\j}\theta^i_s+W^i_{kl\bar s}\ov{\theta^s_j}.
\end{equation}

Let $\mathrm{End}_0(TM,J)$ denote the bundle whose fibre at $p \in M$ consists of the $J$-linear endomorphisms of $T_pM$ which are complex-traceless. It follows with the structure equations (\ref{firststruc},\ref{diffbianchi},\ref{diffbianchi1a}) and straightforward computations, that there exists a unique $(1,\!1)$-form $W$ on $M$ with values in $\mathrm{End}_0(TM,J)$ for which 
$$
W\left(\pd{}{z^l},\pd{}{\ov{z^{\jmath}}}\right)\pd{}{z^k}=(\sigma_z)^*W^i_{kl\j}\pd{}{z^i}=-\pd{\Pi^i_{kl}}{\bar z^j}\pd{}{z^i}
$$
in every local holomorphic coordinate system $z=(z^i)$ on $M$. Here, as usual, we extend tensor fields on $M$ complex multilinearly to the complexified tangent bundle of $M$. The bundle-valued $2$-form $W$ is called the \textit{com\-plex pro\-jec\-tive Weyl curvature} of $[\nabla]$.  
We obtain:
\begin{proposition}
A com\-plex pro\-jec\-tive structure $[\nabla]$ on a complex surface $(M,J)$ is holomorphic if and only if the com\-plex pro\-jec\-tive Weyl tensor of $[\nabla]$ vanishes.  
\end{proposition}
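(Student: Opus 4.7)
The proof is essentially a direct consequence of the coordinate formula \eqref{coordpb} together with the tensorial nature of $W$. My plan is to recognise that both sides of the asserted equivalence translate, in local holomorphic coordinates, to the same set of differential equations on the complex projective invariants $\Pi^i_{jk}$.

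First I would unpack the hypothesis: by definition, $[\nabla]$ is holomorphic if and only if the functions $\Pi^i_{jk}$ are holomorphic in every holomorphic coordinate system $z=(z^i)$ on $M$, which by the Cauchy--Riemann equations amounts to $\partial \Pi^i_{kl}/\partial \bar z^j = 0$ identically in every such chart. Next I would invoke the coordinate expression \eqref{coordpb}, namely $(\sigma_z)^* W^i_{kl\j} = -\partial\Pi^i_{kl}/\partial\bar z^j$, together with the characterisation established immediately before the statement, that the Weyl curvature is the $(1,\!1)$-form $W$ on $M$ with values in $\mathrm{End}_0(TM,J)$ uniquely determined by
$$
W\!\left(\pd{}{z^l},\pd{}{\bar z^j}\right)\pd{}{z^k} = (\sigma_z)^*W^i_{kl\j}\,\pd{}{z^i}
$$
in any holomorphic chart $z$. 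Since $W$ is a globally defined tensor on $M$, the condition $W\equiv 0$ is equivalent to $(\sigma_z)^*W^i_{kl\j} = 0$ holding in every holomorphic chart, which by \eqref{coordpb} is in turn equivalent to $\partial\Pi^i_{kl}/\partial\bar z^j = 0$ in every such chart. Chaining these equivalences with the reformulation of holomorphy from the first sentence of this paragraph yields the proposition.

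There is no substantive obstacle to overcome; the coordinate formula \eqref{coordpb} for $W$, which itself was derived earlier from the differential Bianchi identity \eqref{diffbianchi}, renders the statement essentially tautological once the Cartan-geometric setup of the preceding subsections is in hand. The only mild point worth flagging in the write-up is that the equivalence uses the tensorial (hence chart-independent) nature of $W$ to pass between the local condition in a single chart and the global vanishing on $M$.
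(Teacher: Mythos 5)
Your proposal is correct and follows exactly the route the paper intends: the proposition is stated immediately after the coordinate identity $(\sigma_z)^*W^i_{kl\j}=-\partial\Pi^i_{kl}/\partial\bar z^j$ precisely because, combined with the definition of holomorphy as $\partial\Pi^i_{kl}/\partial\bar z^j=0$ in every chart and the tensorial nature of $W$, the statement is immediate. No gaps; this matches the paper's (implicit) one-line argument.
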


\subsubsection{Com\-plex pro\-jec\-tive Liouville curvature}

From $\d^2\theta^0_i\wedge\ov{\theta^1_0}\wedge\ov{\theta^2_0}=0$ one sees after a short computation that
\begin{equation}\label{diffbianchi3}
\d L_i=-4L_i\theta^0_0+L_j\theta^j_i+L_{ij}\theta^j_0+L_{i\j}\ov{\theta^{\jmath}_0}
\end{equation}
for unique complex-valued functions $L_{i\j},L_{ij}$ on $B$. Using this last equation it is easy to check that the $\pi$-semibasic quantity
\begin{equation}\label{liouvillecurv}
(L_1\theta^1_0+L_2\theta^2_0)\otimes \left(\theta^1_0\otimes \theta^2_0\right)
\end{equation}
is invariant under the $P$ right action and thus the $\pi$-pullback of a tensor field $\lambda$ on $M$ which is called the \textit{com\-plex pro\-jec\-tive Liouville curvature} (see the note of R.~Liouville~\cite{19.0317.02} for the construction of $\lambda$ in the real case). 
\begin{remark}
In the case of real projective structures on surfaces, the projective Weyl curvature vanishes identically. Furthermore, note that contrary to the complex projective Liouville curvature, the com\-plex pro\-jec\-tive Weyl tensor exists as well in higher dimensions, but also contains $(2,\! 0)$ parts (see~\cite{MR0087181} for details).  
\end{remark}

The differential Bianchi-identity \eqref{diffbianchi} implies that if the functions $W^i_{kl\j}$ vanish identically, then the functions $K_{ik\j}$ must vanish identically as well. We have thus shown: 
\begin{proposition}
A com\-plex pro\-jec\-tive structure $[\nabla]$ on a complex surface $(M,J)$ is flat if and only the com\-plex pro\-jec\-tive Liouville and Weyl curvature vanish.
\end{proposition}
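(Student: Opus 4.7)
The forward direction is immediate from the structure equation \eqref{firststruc}: if the Cartan connection is flat, i.e.\ $\Theta=\d\theta+\theta\wedge\theta=0$, then each component $\Theta^i_k$ and $\Theta^0_i$ vanishes identically, so in particular $W^i_{kl\bar\jmath}=0$ and $L_i=0$ on $B$, whence both the Weyl tensor $W$ and the Liouville tensor $\lambda$ vanish on $M$. The content of the proposition is the converse.

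The plan is to show that the vanishing of $W$ and $\lambda$ forces all components of $\Theta$ to vanish. Since $W$ and $\lambda$ are the tensor fields on $M$ whose pullbacks to $B$ encode the functions $W^i_{kl\bar\jmath}$ and $L_i$ respectively, hypothesising $W\equiv 0$ and $\lambda\equiv 0$ gives immediately $W^i_{kl\bar\jmath}\equiv 0$ and $L_i\equiv 0$ on $B$. The only non-obvious ingredient is then to deduce that the remaining curvature functions $K_{ik\bar\jmath}$ also vanish identically, since the structure equation then yields $\Theta\equiv 0$.

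For this last step I would use the differential Bianchi identity \eqref{diffbianchi}. Assuming $W^i_{kl\bar\jmath}\equiv 0$, the left-hand side $\d W^i_{kl\bar\jmath}$ vanishes, and so do all the derived coefficients $W^i_{kl\bar\jmath s}$, $W^i_{kl\bar\jmath\bar s}$, as well as the $1$-form $\varphi^i_{kl\bar\jmath}$ of \eqref{diffbianchi1a} (which is linear in $W$). The identity then collapses to
\[
0=\bigl(\delta^i_k K_{sl\bar\jmath}+\delta^i_l K_{sk\bar\jmath}-3\delta^i_s K_{kl\bar\jmath}\bigr)\theta^s_0.
\]
Since the $\theta^s_0$ are linearly independent, the bracketed expression must vanish for each $s$. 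Contracting $i$ with $s$ (so $\delta^s_s=2$) and using the algebraic Bianchi symmetry $K_{kl\bar\jmath}=K_{lk\bar\jmath}$ gives $-4K_{kl\bar\jmath}=0$, hence $K_{kl\bar\jmath}\equiv 0$. Combined with $W^i_{kl\bar\jmath}\equiv 0$ and $L_i\equiv 0$ this yields $\Theta\equiv 0$, establishing flatness. The only mildly subtle point is the bookkeeping that justifies setting the derived coefficients $W^i_{kl\bar\jmath s}$ and $W^i_{kl\bar\jmath\bar s}$ to zero when $W$ is identically zero; this follows from reading off coefficients in \eqref{diffbianchi} against a basis of $1$-forms after differentiating the identity $W^i_{kl\bar\jmath}\equiv 0$.
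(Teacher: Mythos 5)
Your overall strategy coincides with the paper's: the forward direction is read off from \eqref{firststruc}, and the real content is that $W^i_{kl\j}\equiv 0$ together with $L_i\equiv 0$ forces $K_{ik\j}\equiv 0$, which the paper likewise extracts from the differential Bianchi identity. However, your key step does not work as stated. From $W^i_{kl\j}\equiv 0$ you may conclude $\d W^i_{kl\j}=0$ and $\varphi^i_{kl\j}=0$, and reading off coefficients in \eqref{diffbianchi} against the coframe then gives $W^i_{kl\j\bar s}=0$ together with the vanishing of the \emph{entire} $\theta^s_0$-coefficient,
\[
W^i_{kl\j s}+\delta^i_kK_{sl\j}+\delta^i_lK_{sk\j}-3\delta^i_sK_{kl\j}=0 .
\]
It does \emph{not} give $W^i_{kl\j s}=0$ separately: that quantity is only defined through \eqref{diffbianchi} as the $\theta^s_0$-coefficient minus the $K$-terms, so under your hypothesis it equals $3\delta^i_sK_{kl\j}-\delta^i_kK_{sl\j}-\delta^i_lK_{sk\j}$, and one checks that this expression satisfies all the symmetries recorded after \eqref{diffbianchi} (symmetry in $k,l$ and $W^k_{kl\j s}=0$) for an \emph{arbitrary} symmetric $K$. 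Hence, in the presence of the displayed relation, the assertion $W^i_{kl\j s}=0$ is logically equivalent to the conclusion $K_{kl\j}=0$ you are trying to reach; the argument is circular precisely at the point you flag as ``mildly subtle bookkeeping''. (Relatedly, if you keep the $W^i_{kl\j s}$ term honestly, your contraction over $i=s$ produces $W^s_{kl\j s}-4K_{kl\j}=0$, and the listed symmetries only give the trace over $i$ and $k$, not over $i$ and $s$.)

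The gap is easy to close, but it requires input beyond the symmetries listed after \eqref{diffbianchi}. When $W^i_{kl\j}\equiv 0$, every term of $\kappa^i_{kl\j}$ containing a factor $W$ drops out and $\d W^i_{kl\j}=0$, so $\kappa^i_{kl\j}=K_{kl\j}\theta^i_0$. The identity $\d^2\theta^i_k=0$, i.e.\ $\kappa^i_{kl\j}\wedge\theta^l_0\wedge\ov{\theta^{\jmath}_0}=0$ (summed over $l$ and $\jmath$), therefore reduces to $K_{kl\j}\,\theta^i_0\wedge\theta^l_0\wedge\ov{\theta^{\jmath}_0}=0$; taking $i=1$ forces $K_{k2\j}=0$ and taking $i=2$ forces $K_{k1\j}=0$, whence $K\equiv 0$. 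Equivalently, the genuine content of the Bianchi identity is that the $\theta^s_0$-coefficient of $\kappa^i_{kl\j}$ is symmetric under exchange of $l$ and $s$, and imposing this on $\delta^i_sK_{kl\j}$ kills $K$. With this substitution the rest of your proof (combining $K\equiv 0$ with $W\equiv 0$ and $L_i\equiv 0$ to get $\Theta\equiv 0$) is fine.
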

\begin{remark}\label{remcplxproj}
In~\cite{MR575449} Kobayashi and Ochiai classified compact complex surfaces carrying flat com\-plex pro\-jec\-tive structures. More recently Dumitrescu \cite{MR2574978} showed among other things that a holomorphic projective structure on a compact complex surface must be flat (see also the results by McKay about holomorphic Cartan geometries~\cite{MR2770434}). 
\end{remark} 
\subsubsection{Further identities}

We also obtain
$$
0=\d^2\theta^0_i=\kappa_{ik\j}\wedge\ov{\theta^{\jmath}_0}\wedge\theta^k_0 
$$
with\footnote{We write $\eps_{ij}$ for the antisymmetric $2$-by-$2$ matrix satisfying $\eps_{12}=1$ and $\eps^{ij}$ for the inverse matrix.}
$$
\aligned
\kappa_{ik\j}=&-\d K_{ik\j}+\frac{1}{2}\eps_{sk}L_{i\j}\theta^s_0-K_{ik\j}\left(2\theta^0_0+\ov{\theta^0_0}\right)+K_{sk\j}\theta^s_i+K_{si\j}\theta^s_k-\\
&-W^s_{ik\j}\theta^0_s+K_{ik\bar s}\ov{\theta^s_j}.
\endaligned
$$
It follows that there are complex-valued functions $K_{ik\j l}$ and $K_{kl\bar\imath\j}$ on $B$ satisfying
$$
K_{ik\j l}=K_{ki \j l}, \quad \text{and} \quad K_{kl\bar \imath \j}=K_{lk\bar \imath \j}=K_{kl \j\bar \imath}
$$
such that
\begin{equation}\label{diffbianchi2}
\d K_{ik\j}=\left(K_{ik\j s}+\frac{1}{4}\left(\eps_{sk}L_{i\j}+\eps_{si}L_{k\j}\right)\right)\theta^s_0+K_{ik\j\bar s}\ov{\theta^s_0}+\varphi_{ik\j}
\end{equation}
where
$$
\varphi_{ik\j}=-K_{ik\j}\left(2\theta^0_0+\ov{\theta^0_0}\right)+K_{sk\j}\theta^s_{i}+K_{si\j}\theta^s_k-W^s_{ik\j}\theta^0_s+K_{ik\bar s}\ov{\theta^s_j}.
$$
\subsection{Complex and generalised geodesics}

It is worth explaining how the generalised geodesics of $[\nabla]$ appear in the Cartan geometry $(\pi : B \to M,\theta)$. To this end let $G\subset P\subset \mathrm{PSL}(3,\C)$ denote the quotient group of the group of upper triangular matrices of unit determinant modulo its center. The quotient $B/G$ is the total space of a fibre bundle over $M$ whose fibre $P/G$ is diffeomorphic to $\mathbb{CP}^1$. In fact, $B/G$ may be identified with the total space of the the com\-plex pro\-jec\-tivised tangent bundle $\tau : \mathbb{P}(T^{1,0}M) \to M$ of $(M,J)$. Writing $\theta=(\theta^i_j)_{i,j=0..2}$,
Theorem \ref{cartangeom} implies that the real codimension $4$-subbundle of $TB$ defined by $\theta^2_0=\theta^2_1=0$ descends to a real rank $2$ subbundle $E\subset T\mathbb{P}(T^{1,0}M)$. The integral manifolds of $E$ can most conveniently be identified in local coordinates. Let $z=(z^1,z^2) : U \to \C^2$ be a local holomorphic coordinate system on $M$ and write $\phi$ for the pullback of $\theta$ with the unique section $\sigma_z$ associated to $z$ in Theorem~\ref{cartangeom}. We obtain a local trivialisation of Cartan's bundle
$$
\varphi : U \times P \to \pi^{-1}(U) 
$$
so that for $(z,p) \in U\times P$ we have
\begin{equation}\label{loctriv}
(\varphi^*\theta)_{(z,p)}=\left(\omega_P\right)_p+\mathrm{Ad}(p^{-1})\circ \phi_z
\end{equation}
where $\omega_P$ denotes the Maurer-Cartan form of $P$ and $\mathrm{Ad}$ the adjoint representation of $\mathrm{PSL}(3,\C)$. Consider the Lie group $\tilde{P}\subset \mathrm{SL}(3,\C)$ whose elements are of the form
\begin{equation}\label{covergroup}
\left(\begin{array}{cc} \det a^{-1} & b \\ 0 & a \end{array}\right)
\end{equation}
for $ a\in \mathrm{GL}(2,\C)$ and $b^{t} \in \C^2$. By construction, the elements of $P$ are equivalence classes of elements in $\tilde{P}$ where two elements are equivalent if they differ by scalar multiplication with a complex cube root of $1$. The canonical projection $\tilde{P} \to P$ will be denoted by $\nu$. Note that a piece $N$ of an integral manifold of $E$ that is contained in $\tau^{-1}(U)$ is covered by a map 
$$
(z^1,z^2,p) : N \to U \times \tilde{P}
$$
where $p : N \to \tilde{P}$ may be taken to be of the form
$$
p=\left(\begin{array}{ccr} \frac{1}{(a_1)^2+(a_2)^2} & 0 & 0 \\ 0 & a_1 & -a_2 \\ 0 & a_2 & a_1\end{array}\right)
$$
for smooth complex-valued functions $a_i : N \to \C$ satisfying $(a_1)^2+(a_2)^2\neq 0$. 

We first consider the case where $N$ is one-dimensional. We fix a local coordinate $t$ on $N$. It follows with~\eqref{loctriv} and straightforward calculations that
$$
\left(\varphi \circ (z^1,z^2,\nu\circ p)\right)^*\theta^2_0=\frac{a_1\dot{z}^2-a_2\dot{z}^1}{\left((a_1)^2+(a_2)^2\right)^2}\d t 
$$
where $\dot{z}^i$ denote the derivative of $z^i$ with respect to $t$. Hence we may take 
$$
a_1=\dot{z}^1 \quad \text{and} \quad a_2=\dot{z}^2. 
$$
Writing $\beta=\left(\varphi \circ (z^1,z^2,\nu\circ p)\right)^*\theta^2_1$ and using~\eqref{loctriv} again, we compute
$$
\aligned
\beta=&\left[\dot{z}^1\ddot{z}^2-\dot{z}^2\ddot{z}^1+\left(\dot{z}^1\dot{z}^2(\Pi^2_{21}-\Pi^1_{11})+(\dot{z}^1)^2\Pi^2_{11}-(\dot{z}^2)^2\Pi^1_{12}\right)\dot{z}^1+\right.\\
&\left.+\left(\dot{z}^1\dot{z}^2(\Pi^2_{22}-\Pi^1_{12})+(\dot{z}^1)^2\Pi^2_{12}-(\dot{z}^2)^2\Pi^1_{22}\right)\dot{z}^2\right]\frac{\d t}{(\dot{z}^1)^2+(\dot{z}^2)^2}.
\endaligned
$$
Note that since $\Pi^i_{ik}=0$ for $k=1,2$, it follows that $\beta\equiv 0$ is equivalent to $(z^1,z^2)$ satisfying the following \textsc{ODE} system
$$
\dot{z}^i\left(\ddot{z}^j+\Pi^j_{kl}\dot{z}^k\dot{z}^l\right)=\dot{z}^j\left(\ddot{z}^i+\Pi^i_{kl}\dot{z}^k\dot{z}^l\right), \quad i,j=1,2. 
$$
This last system is easily seen to be equivalent to the system~\eqref{hplanarcurve}. Consequently, the one-dimensional integral manifolds of $E$ are the generalised geodesics of $[\nabla]$. 

Note that in the case of two-dimensional integral manifolds the above computations carry over where $t$ is now a complex parameter, i.e.~the two-dimensional integral manifolds are immersed complex curves $Y \subset M$ for which $\nabla_{\dot{Y}}\dot{Y}$ is proportional to $\dot{Y}$ for some (and hence any) $\nabla \in [\nabla]$. This last condition is equivalent to $Y$ being a totally geodesic immersed complex curve with respect to $([\nabla],J)$ (c.f.~\cite[Lemma 4.1]{MR1348154}) . A totally geodesic immersed complex curve $Y\subset M$ which is maximally extended is called a \textit{complex geodesic}. Since the complex geodesics are the (maximally extended) two-dimensional integral manifolds of $E$, they exist only provided that $E$ is integrable. We will next determine the integrability conditions for $E$. Recall that $E\subset T\mathbb{P}(T^{1,0}M)$ is defined by the equations $\theta^2_1=\theta^2_0=0$ on $B$. It follows with the structure equations \eqref{firststruc} that 
$$
\d \theta^2_0=0 \quad \text{mod}\quad \theta^2_0,\theta^2_1 
$$
and
$$
\d \theta^2_1=W^2_{11\j}\theta^1_0\wedge \ov{\theta^{\jmath}_0}\quad \text{mod}\quad \theta^2_0,\theta^2_1.
$$
Consequently, $E$ is integrable if and only if $W^2_{11\bar 1}=W^2_{11\bar 2}=0$. As a consequence of~\eqref{diffbianchi} and $W^2_{11\bar 1}=0$ we obtain
$$
0=\varphi^2_{11\bar 1}=-W^2_{11\bar 1}\left(\theta^0_0+\ov{\theta^0_0}\right)+W^2_{1s\bar 1}\theta^s_1+W^2_{1s\bar 1}\theta^s_1-W^s_{11\bar 1}\theta^2_s+W^2_{11\bar s}\ov{\theta^s_j},
$$
which is equivalent to $2W^2_{12\bar 1}=W^1_{11\bar 1}$. Using the symmetries of the complex projective Weyl tensor we compute
$$
W^1_{11\bar 1}=-W^2_{21\bar 1}=2W^2_{12\bar 1}=2W^2_{21\bar 1}, 
$$
thus showing that $W^1_{11\bar 1}=W^2_{12\bar 1}=0$. From this we obtain 
$$
0=\varphi^1_{11\bar 1}=2W^1_{12\bar 1}\theta^2_1-W^2_{11\bar 1}\theta^1_2+W^1_{11\bar 2}\ov{\theta^2_1}.
$$
thus implying $W^1_{12\bar 1}=W^2_{11\bar 1}=W^1_{11\bar 2}=0$. Continuing in this vein allows to conclude that all components of the complex projective Weyl tensor must vanish. We may summarise: 
\begin{proposition}
Let $(M,J,[\nabla])$ be a com\-plex pro\-jec\-tive surface. Then the following statements are equivalent:
\begin{itemize}\label{capprop}
\item[(i)] $[\nabla]$ is holomorphic; 
\item[(ii)] The com\-plex pro\-jec\-tive Weyl tensor of $[\nabla]$ vanishes;
\item[(iii)] The rank $2$ bundle $E \to \mathbb{P}(T^{1,0}M)$ is Frobenius integrable; 
\item[(iv)] Every complex line $L\subset T^{1,0}M$ is tangent to a unique complex geodesic. 
\end{itemize}
\end{proposition}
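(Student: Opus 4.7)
The plan is to exploit what has already been recorded in the subsection. The equivalence (i)$\Leftrightarrow$(ii) is the first proposition of the subsection, and essentially all of the calculation for (ii)$\Leftrightarrow$(iii) is already carried out in the paragraph immediately preceding the statement; the remaining piece (iii)$\Leftrightarrow$(iv) is a direct application of the Frobenius theorem. So the proof amounts to organising these facts into a cyclic chain of implications.

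For (ii)$\Leftrightarrow$(iii): if $W=0$ as a tensor on $M$, then all of its frame components on $B$ vanish, and from the structure equations \eqref{firststruc} one reads $\d\theta^2_0\equiv 0$ and $\d\theta^2_1\equiv W^2_{11\bar 1}\,\theta^1_0\wedge\ov{\theta^1_0}+W^2_{11\bar 2}\,\theta^1_0\wedge\ov{\theta^2_0}\equiv 0$ modulo $\theta^2_0,\theta^2_1$, so $E$ is Frobenius integrable. Conversely, integrability of $E$ forces $W^2_{11\bar 1}$ and $W^2_{11\bar 2}$ to vanish identically on $B$, and the bootstrap displayed above the proposition---repeatedly applying the differential Bianchi identity \eqref{diffbianchi} together with the symmetries $W^i_{kl\bar\jmath}=W^i_{lk\bar\jmath}$ and the trace-freeness $W^l_{li\bar\jmath}=0$, and extracting coefficients of the semibasic and vertical $1$-forms from each relation $\varphi^i_{kl\bar\jmath}\equiv 0$---forces every remaining component of the complex projective Weyl tensor to vanish. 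The substance of the argument sits in this Bianchi bootstrap, and it has already been executed explicitly.

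For (iii)$\Leftrightarrow$(iv): I would invoke the identification recalled in the previous subsection, namely that the two-dimensional integral manifolds of $E$ are precisely the tangent lifts of complex geodesics of $([\nabla],J)$. If $E$ is Frobenius integrable, then through each $L\in\mathbb{P}(T^{1,0}M)$ the Frobenius theorem gives a unique maximal two-dimensional integral manifold, whose projection to $M$ is a complex geodesic tangent to the line $L$; uniqueness descends from Frobenius. Conversely, the complex geodesics supplied by (iv) lift to two-dimensional submanifolds of $\mathbb{P}(T^{1,0}M)$ tangent to $E$ and passing through every point, which is enough to conclude integrability of $E$. The only substantial obstacle in the whole proposition is the Bianchi bootstrap for (iii)$\Rightarrow$(ii), but since that has already been carried out above the statement, the proof reduces to an assembly of facts already at our disposal.
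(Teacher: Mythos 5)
Your proof is correct and follows essentially the same route as the paper: the paper states this proposition as a summary (``We may summarise'') of exactly the facts you assemble --- the earlier proposition for (i)$\Leftrightarrow$(ii), the computation of $\d\theta^2_0$ and $\d\theta^2_1$ modulo $\theta^2_0,\theta^2_1$ plus the Bianchi bootstrap for (ii)$\Leftrightarrow$(iii), and the identification of two-dimensional integral manifolds of $E$ with (lifts of) complex geodesics together with the Frobenius theorem for (iii)$\Leftrightarrow$(iv).
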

\begin{remark}
The standard flat complex projective structure on $\mathbb{CP}^2$ is holomorphic and the complex geodesics are simply the linearly embedded projective lines $\mathbb{CP}^1\subset \mathbb{CP}^2$.   
\end{remark}

\begin{remark}
Note that the integrability conditions for $E$ are a special case of a more general result obtained by \v{C}ap in~\cite{MR2139714}. There it is shown that $E$ is part of an elliptic CR structure of CR dimension and codimension $2$, which the complex projective structure induces on $\mathbb{P}(T^{1,0}M)$. Furthermore, it is also shown that the integrability of $E$ is equivalent to the holomorphicity of the complex projective surface.  
\end{remark}
 
\section{K\"ahler metrisability}

In this section we will derive necessary conditions for a com\-plex pro\-jec\-tive structure $[\nabla]$ on a complex surface $(M,J)$ to arise via the Levi-Civita connection of a (pseudo-)K\"ahler metric. There exists a complex projectively invariant linear first order differential operator acting on $J$-hermitian $(2,\! 0)$ tensor fields on $M$ with weight $1/3$, i.e~sections of the bundle $S^2_J(TM)\otimes \left(\Lambda^4(T^*M)\right)^{1/3}$. This differential operator has the property that nondegenerate sections in its kernel are in one-to-one correspondence with (pseudo-)K\"ahler metrics on $M$ whose Levi-Civita connection is compatible with $[\nabla]$ (see~\cite{MR492674,MR2998672,MR1619720}).  
\subsection{The differential analysis}

We will show that in the surface case, the (pseudo-)K\"ahler metrics on $(M,J,[\nabla])$ whose Levi-Civita connection is compatible with $[\nabla]$ can equivalently be characterised in terms of a differential system on Cartan's bundle $(\pi : B \to M,\theta)$. 

\begin{proposition}\label{upstairsmetri}
Suppose the (pseudo-)K\"ahler metric $g$ is compatible with $[\nabla]$. Then, writing $\pi^*g=g_{i\j}\theta^i_0\circ\ov{\theta^{\jmath}_0}$ and setting $h_{i\j}=g_{i\j}\left(g_{1\bar 1}g_{2\bar 2}-|g_{1\bar 2}|^2\right)^{-2/3}$,
we have
\begin{equation}\label{difsys}
\d h_{i\bar \jmath}=h_{i\bar\jmath}\left(\theta^0_0+\ov{\theta^0_0}\,\right)+h_{i\bar s}\ov{\theta^s_{j}}+h_{s\bar\jmath}\theta^s_i +h_i\ov{\eps_{sj}\theta^s_0}+\ov{h_{j}}\varepsilon_{si}\theta^s_0
\end{equation}
for some complex-valued functions $h_i$ on $B$. Conversely, suppose there exist com\-plex-valued functions $h_{i\bar \jmath}=\ov{h_{j\bar \imath}}$ and $h_i$ on $B$ solving \eqref{difsys} and satisfying $\left(h_{1\bar 1}h_{2\bar 2}-|h_{1\bar 2}|^2\right)\neq 0$, then the symmetric $2$-form
$$
h_{i\j}\left(h_{1\bar 1}h_{2\bar 2}-|h_{1\bar 2}|^2\right)^{-2}\theta^i_0\circ \ov{\theta^{\jmath}_0}
$$
is the $\pi$-pullback of a $[\nabla]$-compatible (pseudo-)K\"ahler metric on $M$. 
\end{proposition}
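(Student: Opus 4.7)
The plan is to reduce \eqref{difsys} to a coordinate identity along the tautological section $\sigma_z$ of Theorem~\ref{cartangeom}, verify it as a direct consequence of the K\"ahler condition together with \eqref{hprojequivcoord}, and then extend the result from $\sigma_z(U)$ to all of $B$ by $P$-equivariance. Along $\sigma_z$ one has $\sigma_z^*\theta^i_0=\d z^i$, $\sigma_z^*\theta^0_0=0$ and $\sigma_z^*\theta^s_i=\Pi^s_{ik}\d z^k$, so pulling \eqref{difsys} back by $\sigma_z$ becomes the coordinate equation
\[
\partial_l h_{i\j}=\Pi^s_{il}h_{s\j}+\eps_{li}\ov{h_j}
\]
together with its complex conjugate. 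For the forward direction, since $\nabla^g\in[\nabla]$ the K\"ahler Christoffel symbols $\Gamma^i_{jk}=g^{i\bar m}\partial_j g_{k\bar m}$ satisfy \eqref{hprojequivcoord} relative to $\Pi^i_{jk}$; taking the $i=j$ trace and using $\Pi^l_{lk}=0$ forces $\beta_k=\tfrac13\partial_k\log\det g$, whence
\[
\partial_l g_{i\j}=\Pi^s_{il}g_{s\j}+\beta_i g_{l\j}+\beta_l g_{i\j}.
\]
The weight rescaling $h_{i\j}=g_{i\j}(\det g)^{-2/3}$ cancels precisely the diagonal $\beta_l g_{i\j}$-term, leaving a residue $\beta_i h_{l\j}-\beta_l h_{i\j}$ that is antisymmetric in $(i,l)$; writing it as $\eps_{li}\ov{h_j}$ with $\ov{h_j}:=\beta_2 h_{1\j}-\beta_1 h_{2\j}$ produces the desired functions $h_j$ on $\sigma_z(U)$.

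The terms in \eqref{difsys} built from $\theta^0_0+\ov{\theta^0_0}$ and from $\theta^s_i,\ov{\theta^s_j}$ encode exactly the transformation law of an equivariant representative of a section of $S^2_J(TM)\otimes(\Lambda^4T^*M)^{1/3}$, so once \eqref{difsys} is verified along $\sigma_z(U)$, $P$-equivariance propagates it along each fibre of $\pi$, and varying $z$ covers all of $B$. For the converse, the same equivariance reading shows that any solution $(h_{i\j})$ with nondegenerate determinant descends to a nondegenerate weighted section on $M$; the candidate $g$ defined in the statement satisfies $\det g=(\det h)^{-3}$, and running the above computation in reverse with $\beta_k:=\tfrac13\partial_k\log\det g$ identifies the complex-linear connection $\Pi^i_{jk}+\delta^i_j\beta_k+\delta^i_k\beta_j$ as the Levi-Civita connection of $g$, so that $\nabla^g\in[\nabla]$, while K\"ahlerness follows from the hermitian symmetry of $h_{i\j}$ together with the coordinate equation.

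The coordinate bookkeeping is routine once the weight $-2/3$ is singled out; the real difficulty lies in the $P$-equivariance step. One must check that an arbitrary $h_i$ arising on $B$ transforms consistently under the full right $P$-action, and in particular under the abelian nilradical that corresponds to the $\beta$-freedom in \eqref{hprojequiv}, not merely under the reductive Levi factor whose action is visible directly from the $\theta^s_i$-terms. Equivalently, \eqref{difsys} must be self-compatible: differentiating both sides and reducing via the structure equations \eqref{firststruc} must close up, and the algebraic identity that emerges is precisely the relation that will tie $h_{i\j}$ to the complex projective Weyl curvature $W^i_{kl\j}$ in the prolongation analysis carried out in the sequel.
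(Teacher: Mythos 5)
Your proposal is correct and takes essentially the same route as the paper: the coordinate identity $\partial_l h_{i\j}=\Pi^s_{il}h_{s\j}+\eps_{li}\ov{h_j}$ obtained from $\nabla g=0$, the trace normalisation $\beta_k=\tfrac13\partial_k\log\det g$, the weight $-2/3$ cancellation with antisymmetric residue, and the reverse computation identifying $\Pi^i_{jk}+\delta^i_j\beta_k+\delta^i_k\beta_j$ as the Levi-Civita connection are exactly the paper's steps. One minor caveat: the $P$-equivariance needed to propagate \eqref{difsys} off the sections $\sigma_z$ is not "equivalent" to the closure condition $\d^2h_{i\j}=0$ — the latter is the separate integrability analysis of the prolongation (leading to \eqref{firstinte}), whereas equivariance follows directly from the definition of $h_{i\j}$ via $\pi^*g$ and the transformation law of $\theta$.
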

\begin{proof}
Let $g$ be a (pseudo-)K\"ahler metric on $(M,J)$ and write $g=g_{i\j}\,\d z^i\circ \d \ov{z^\jmath}$ for local holomorphic coordinates $z=(z^1,z^2) : U \to \C^2$ on $M$. Denoting by $\nabla$ the Levi-Civita connection of $g$, on $U$ the identity $\nabla g=0$ is equivalent to  
$$
\frac{\partial g_{k\j}}{\partial z^i}=g_{s\j}\Gamma^s_{ik}\qquad \text{and}\qquad \frac{\partial g_{k\j}}{\partial \ov{z^{\imath}}}=g_{k\bar s}\ov{\Gamma^s_{ij}},
$$
where $\Gamma^i_{jk}$ denote the complex Christoffel symbols of $\nabla$. Abbreviate $G=\det g_{i\j}$, then we obtain
$$
\frac{\partial G}{\partial z^i}=G\,\Gamma^s_{si}.
$$
Hence, the partial derivative of $h_{k\j}=g_{k\j}\,G^{-2/3}$ with respect to $z^i$ becomes 
$$
\aligned
\frac{\partial h_{k\j}}{\partial z^i}&=g_{l\j}\,\Gamma^l_{ik}\,G^{-2/3}-\frac{2}{3}g_{k\j}\,\Gamma^s_{si}\,G^{-2/3}=h_{l\j}\left(\Gamma^l_{ik}-\frac{2}{3}\Gamma^s_{si}\delta^l_k\right)\\
&=h_{l\j}\left(\Gamma^l_{ik}-\frac{1}{3}\Gamma^s_{si}\delta^l_k-\frac{1}{3}\Gamma^s_{sk}\delta^l_i\right)-\frac{1}{3}h_{l\j}\left(\Gamma^s_{si}\delta^l_k-\Gamma^s_{sk}\delta^l_i\right).
\endaligned
$$
Note that the last two summands in the last equation are antisymmetric in $i,k$, so that we may write
$$
-\frac{1}{3}h_{l\j}\left(\Gamma^s_{si}\delta^l_k-\Gamma^s_{sk}\delta^l_i\right)=\ov{h_j}\eps_{ik}
$$
for unique complex-valued functions $h_i$ on $U$. We thus get
\begin{equation}\label{pullba}
\frac{\partial h_{k\j}}{\partial z^i}=h_{s\j}\Pi^s_{ik}+\ov{h_j}\eps_{ik}. 
\end{equation}
In entirely analogous fashion we obtain
\begin{equation}\label{pullbb}
\frac{\partial h_{k\j}}{\partial \ov{z^{\imath}}}=h_{k\bar s}\ov{\Pi^s_{ij}}+h_k\ov{\eps_{ij}}.
\end{equation}
Recall from Theorem~\ref{cartangeom} that the coordinate system $z : U \to \C^2$ induces a unique section $\sigma_z : U \to B$ of Cartan's bundle such that
\begin{equation}\label{pullbc}
\left(\sigma_z\right)^*\theta^0_0=0, \qquad \left(\sigma_z\right)^*\theta^i_0=\d z^i, \qquad \left(\sigma_z\right)^*\theta^i_j=\Pi^i_{jk}\d z^k. 
\end{equation}
Consequently, using~(\ref{pullba},\,\ref{pullbb},\,\ref{pullbc}) we see that~\eqref{difsys} is necessary. 

Conversely, suppose there exist complex-valued functions $h_{i\j}=\ov{h_{j\bar\imath}}$ and $h_i$ on $B$ solving \eqref{difsys} for which 
$$\left(h_{1\bar 1}h_{2\bar 2}-|h_{1\bar 2}|^2\right)\neq 0.
$$
Setting $g_{i\j}=h_{i\j}\left(h_{1\bar 1}h_{2\bar 2}-|h_{1\bar 2}|^2\right)^{-2}$ we get
\begin{equation}\label{suffdifsys}
\d g_{i\bar \jmath}=-g_{i\bar \jmath}\left(\theta^0_0+\bar\theta^0_0\right)+g_{i\bar s}\ov{\theta^s_{j}}+g_{s\bar\jmath}\theta^s_i+g_{i\bar \jmath\bar s}\ov{\theta^s_0}+g_{i\bar \jmath s}\theta^s_0
\end{equation}
with
$$
g_{i\bar \jmath \bar s}=\frac{(h_{i\bar \jmath}h_{l\bar s}+h_{i\bar s}h_{l\bar \jmath})\eps^{lk}h_k}{(h_{1\bar 1}h_{2\bar 2}-|h_{1\bar 2}|^2)^3}, \quad \text{and} \quad g_{i\bar \jmath k}=\frac{(h_{i\bar \jmath}h_{k\bar s}+h_{k\j}h_{i\bar s})\ov{\eps^{su}{h_u}}}{(h_{1\bar 1}h_{2\bar 2}-|h_{1\bar 2}|^2)^3}.
$$
It follows with~\eqref{suffdifsys} that there exists a unique $J$-Hermitian metric $g$ on $M$ such that $\pi^*g=g_{i\j}\,\theta^i_0\circ \ov{\theta^{\jmath}_0}$. Choose local holomorphic coordinates $z=(z^1,z^2) : U \to \C^2$ on $M$. By abuse of notation we will write $g_{i\j},g_{i\bar \jmath \bar s},g_{i\bar \jmath k}$ for the pullback of the respective functions on $B$ by the section $\sigma_z : U \to B$ associated to $z$. From~\eqref{suffdifsys} we obtain
$$
\frac{\partial g_{i\j}}{\partial z^s}=g_{u\j}\Pi^u_{is}+g_{i\j s}=g_{u\j}\left(\Pi^u_{is}+g^{\bar v u}g_{i\bar v s}\right)=g_{u\j}\left(\Pi^u_{is}+\delta^u_ib_s+\delta^u_sb_i\right)=g_{u\j}\Gamma^u_{is}
$$
where we write
$$
b_i=\frac{h_{i\bar s}\ov{\eps^{su}h_u}}{(h_{1\bar 1}h_{2\bar 2}-|h_{1\bar 2}|^2)^{11/3}}\qquad \text{and} \qquad \Gamma^i_{jk}=\Pi^i_{jk}+\delta^i_jb_k+\delta^i_kb_j.  
$$
Likewise we obtain
$$
\frac{\partial g_{i\j}}{\partial \ov{z^{s}}}=g_{i\bar u}\ov{\Gamma^u_{js}}.
$$
It follows that there exists a complex-linear connection $\nabla$ on $U$ defining $[\nabla]$ and whose complex Christoffel symbols are given by $\Gamma^i_{jk}$. By construction, the connection $\nabla$ preserves $g$ and hence must be the Levi-Civita connection of $g$. Furthermore, $\nabla$ being complex-linear implies that $g$ is K\"ahler. This completes the proof.    
\end{proof}
\subsubsection{First prolongation}
Differentiating \eqref{difsys} yields
\begin{equation}\label{difsys1}
0=\d^2 h_{i\j}=\eps_{si}\ov{\eta_j}\wedge\theta^s_0+\ov{\eps_{sj}}\eta_i\wedge\ov{\theta^s_0}-(h_{s\j}W^{s}_{iv\bar u}+h_{i\bar s}\ov{W^{s}_{ju\bar v}})\ov{\theta^u_0}\wedge\theta^v_0 
\end{equation}
with
$$
\eta_k=\d h_k+h_k\left(\ov{\theta^0_0}-\theta^0_0\right)-h_j\theta^j_k+\ov{\eps^{ij}}h_{k\j}\ov{\theta^0_i}.
$$
This implies that we can write
\begin{equation}\label{ansatz}
\eta_i=a_{ij}\theta^j_0
\end{equation}
for unique complex-valued functions $a_{ij}$ on $B$. Equations \eqref{difsys1} and \eqref{ansatz} imply
\begin{equation}\label{algident}
\eps_{ki}\ov{a_{jl}}-\ov{\eps_{lj}} a_{ik}=\ov{h_{j\bar s}}W^s_{ik\bar l}-h_{i\bar s}\ov{W^s_{jl\bar k}}
\end{equation}
Contracting this last equation with $\ov{\eps^{jl}}\eps^{ik}$ implies that the function 
$$
h=-\frac{1}{2}\ov{\eps^{ij}a_{ij}}
$$
is real-valued. We get
$$
a_{jl}=\eps_{jl}h-\frac{1}{2}\ov{\eps^{iu}}h_{s\bar \imath}W^s_{jl\bar u}, 
$$
and thus
$$
\d h_i=h_i\left(\theta^0_0-\ov{\theta^0_0}\,\right)+h_j\theta^j_i+h_{i\bar s}\ov{\eps^{sl}\theta^0_l}+\left(\eps_{ij}h-\frac{1}{2}\ov{\eps^{uv}} h_{s\bar u}W^s_{ij\bar v}\right)\theta^j_0. 
$$
Plugging the formula for $a_{ij}$ back into \eqref{algident} yields the integrability conditions
$$
h_{s\j}W^s_{ik\bar l}-h_{i\bar s}\ov{W^s_{jl\bar k}}=\frac{1}{2}\ov{\eps_{lj}\eps^{uv}}h_{s\bar u}W^s_{ik\bar v}-\frac{1}{2}\eps_{ki}\eps^{uv}h_{u\bar s}\ov{W^s_{jl\bar v}}.
$$
This last equation can be simplified so that we obtain: 
\begin{proposition}
A necessary condition for a com\-plex pro\-jec\-tive surface $(M,J,[\nabla])$ to be K\"ahler metrisable is that 
\begin{equation}\label{firstinte}
\ov{h_{j\bar s}}W^s_{ik\bar l}+\ov{h_{l\bar s}}W^s_{ik\j}=h_{k\bar s}\ov{W^s_{jl\bar \imath}}+h_{i\bar s}\ov{W^s_{jl\bar k}}
\end{equation}
admits a nondegenerate solution $\ov{h_{i\j}}=h_{j\bar\imath}$.    
\end{proposition}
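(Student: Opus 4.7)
The plan is to harvest the integrability conditions of the differential system \eqref{difsys} via its first prolongation, essentially packaging the preceding paragraphs into a formal proof. If $(M,J,[\nabla])$ is K\"ahler metrisable then by Proposition \ref{upstairsmetri} there exist functions $h_{i\bar\jmath}=\ov{h_{j\bar\imath}}$ and $h_i$ on $B$ solving \eqref{difsys} with $h_{1\bar 1}h_{2\bar 2}-|h_{1\bar 2}|^2\neq 0$, so the task reduces to extracting the algebraic constraints on $h_{i\bar\jmath}$ that this prolongation forces.

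First I would apply $d^2=0$ to \eqref{difsys}, using the structure equations \eqref{firststruc} and the differential Bianchi identity \eqref{diffbianchi}, organising the result so that the terms involving $\d h_k$ are absorbed into the one-forms $\eta_k$ defined above. The outcome is precisely \eqref{difsys1}. Its structure forces each $\eta_i$ to be $\pi$-semibasic of type $(1,\!0)$, so that $\eta_i=a_{ij}\theta^j_0$ for unique complex-valued functions $a_{ij}$ on $B$, and matching the coefficients of $\theta^v_0\wedge\ov{\theta^u_0}$ in \eqref{difsys1} then yields the algebraic constraint \eqref{algident}.

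Next I would eliminate the $a_{ij}$. Contracting \eqref{algident} with $\eps^{ik}\ov{\eps^{jl}}$ and invoking the trace-freeness $W^s_{si\bar v}=0$ shows that $h:=-\tfrac{1}{2}\ov{\eps^{ij}a_{ij}}$ is real-valued, after which \eqref{algident} can be solved in closed form for $a_{jl}$, producing the displayed expression $a_{jl}=\eps_{jl}h-\tfrac{1}{2}\ov{\eps^{uv}}h_{s\bar u}W^s_{jl\bar v}$. Substituting this expression back into \eqref{algident}, the $h$-contributions cancel by reality of $h$ together with the identity $\eps_{ki}\ov{\eps_{jl}}=\eps_{ik}\ov{\eps_{lj}}$, leaving a purely $W$-linear relation among the components of $h_{i\bar\jmath}$.

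The final step is to recast this relation in the manifestly symmetric shape \eqref{firstinte}. Using the symmetry $W^s_{ik\bar v}=W^s_{ki\bar v}$ and trace-freeness once more, the $\eps$-contracted Weyl terms rewrite as linear combinations of $\ov{h_{j\bar s}}W^s_{ik\bar l}$, $\ov{h_{l\bar s}}W^s_{ik\bar\jmath}$, $h_{k\bar s}\ov{W^s_{jl\bar\imath}}$ and $h_{i\bar s}\ov{W^s_{jl\bar k}}$; a short index reshuffle then gives \eqref{firstinte}. The nondegeneracy claim is immediate from Proposition \ref{upstairsmetri}. The main obstacle is the careful $\eps$-bookkeeping in this last symmetrisation; everything upstream is dictated by $d^2=0$ and the prolongation ansatz.
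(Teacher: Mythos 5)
Your proposal follows the paper's own derivation step for step: differentiate \eqref{difsys} to get \eqref{difsys1}, set $\eta_i=a_{ij}\theta^j_0$, extract \eqref{algident}, show $h=-\tfrac12\ov{\eps^{ij}a_{ij}}$ is real, solve for $a_{jl}$, substitute back, and simplify via the two-dimensional identity $\eps_{lj}\eps^{uv}=\delta^u_j\delta^v_l-\delta^u_l\delta^v_j$ to reach \eqref{firstinte}. One cosmetic slip: the vanishing of the right-hand side of \eqref{algident} under contraction with $\eps^{ik}\ov{\eps^{jl}}$ follows from the algebraic Bianchi symmetry $W^s_{ik\bar l}=W^s_{ki\bar l}$ (antisymmetric against symmetric), not from the trace-freeness $W^s_{si\bar v}=0$ that you cite, but the step itself is valid.
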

\begin{remark}
Note that under suitable constant rank assumptions the system \eqref{firstinte} defines a subbundle of the bundle over $M$ whose sections are hermitian forms on $(M,J)$. For a generic complex projective structure $[\nabla]$ this subbundle does have rank $0$. 
\end{remark}
\subsubsection{Second prolongation}
We start by computing
$$
0=\d ^2 h_i\wedge\theta^1_0\wedge\theta^2_0=-\left(h_{i\j}\ov{\eps^{jk}L_{k}}\right)\theta^1_0\wedge\ov{\theta^1_0}\wedge\theta^2_0\wedge\ov{\theta^2_0}
$$
which is equivalent to
$$
\left(\begin{array}{cc} h_{1\bar 1} & h_{1\bar 2} \\ h_{2\bar 1} & h_{2\bar 2}\end{array}\right)\cdot \left(\begin{array}{r} \ov{L_2} \\ -\ov{L_1}\end{array}\right)=0
$$
which cannot have any solution with $(h_{11}h_{22}-|h_{12}|^2)\neq 0$ unless $L_1=L_2=0$. This shows: 
\begin{theorem}\label{liouvflat}
A necessary condition for a com\-plex pro\-jec\-tive surface to be K\"ahler metrisable is that it is Liouville-flat, i.e.~its com\-plex pro\-jec\-tive Liouville curvature vanishes.    
\end{theorem}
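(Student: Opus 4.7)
The strategy is to carry out a \emph{second prolongation} of the first-order system \eqref{difsys}. Having introduced the auxiliary functions $h_i$ through $\eta_i=a_{ij}\theta^j_0$ in the first prolongation, an explicit formula
$$\d h_i=h_i\left(\theta^0_0-\ov{\theta^0_0}\right)+h_j\theta^j_i+h_{i\bar s}\ov{\eps^{sl}\theta^0_l}+\left(\eps_{ij}h-\tfrac{1}{2}\ov{\eps^{uv}}h_{s\bar u}W^s_{ij\bar v}\right)\theta^j_0$$
is available. The plan is to differentiate this once more and extract the integrability condition hidden in $\d^2 h_i=0$. Instead of evaluating the full $3$-form identity, I would wedge it with $\theta^1_0\wedge\theta^2_0$. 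This is a decisive simplification: every term carrying an extra $\theta^k_0$ factor is annihilated, leaving only the contributions whose remaining $2$-form part is supported on $\ov{\theta^1_0}\wedge\ov{\theta^2_0}$.

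Once the wedge is taken, I would trace which terms of $\d^2 h_i$ actually produce such a piece. Because $\Theta^0_0=0$ and both $\Theta^i_0$ and $\Theta^i_j$ are of type $(1,\!1)$ in the semi-basic forms by \eqref{firststruc}, none of the terms $h_i\,\d(\theta^0_0-\ov{\theta^0_0})$, $h_j\,\d\theta^j_i$, or the differential of the $\theta^j_0$-coefficient contributes a $\ov{\theta^1_0}\wedge\ov{\theta^2_0}$-term after wedging with $\theta^1_0\wedge\theta^2_0$. The unique surviving contribution arises from $\d(h_{i\bar s}\ov{\eps^{sl}\theta^0_l})$, where $\ov{\d\theta^0_l}$ contains the Liouville piece $\ov{L_l\,\theta^1_0\wedge\theta^2_0}$. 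After collecting, the identity reduces to $h_{i\j}\ov{\eps^{jk}L_k}=0$, which, written as a matrix equation, reads
$$\left(\begin{array}{cc}h_{1\bar 1}&h_{1\bar 2}\\h_{2\bar 1}&h_{2\bar 2}\end{array}\right)\left(\begin{array}{r}\ov{L_2}\\-\ov{L_1}\end{array}\right)=0.$$
Since a (pseudo-)K\"ahler solution requires $(h_{i\j})$ to be nondegenerate by Proposition~\ref{upstairsmetri}, the only possibility is $L_1=L_2=0$, which is exactly the vanishing of the complex projective Liouville curvature $\lambda$ defined in \eqref{liouvillecurv}.

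The main obstacle is verifying the second step in detail, namely that the Liouville piece is indeed the sole contributor after the wedge. This requires a careful type-counting argument using the trace-freeness $W^l_{li\bar\jmath}=0$ of the Weyl tensor, the symmetries $K_{ik\j}=K_{ki\j}$, and the absence of $(2,\!0)$- and $(0,\!2)$-parts in $\Theta^i_k$ and $\Theta^i_0$ away from the Liouville term; once these are in hand, everything else is routine exterior calculus on Cartan's bundle and an application of \eqref{diffbianchi}, \eqref{diffbianchi3}.
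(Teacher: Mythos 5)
Your proposal is correct and follows essentially the same route as the paper: the paper's proof is precisely the computation $0=\d^2 h_i\wedge\theta^1_0\wedge\theta^2_0=-\left(h_{i\j}\ov{\eps^{jk}L_{k}}\right)\theta^1_0\wedge\ov{\theta^1_0}\wedge\theta^2_0\wedge\ov{\theta^2_0}$, leading to the same matrix equation and the same nondegeneracy argument. Your identification of the conjugated Liouville term in $\ov{\Theta^0_l}$ as the sole surviving $(0,\!2)$ contribution after wedging is exactly the mechanism the paper relies on.
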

\begin{remark}
Note that the vanishing of the Liouville curvature is equivalent to requesting that the curvature of $\theta$ is of type $(1,\! 1)$ only, which agrees with general results in~\cite{MR2532439}. 
\end{remark}
Assuming henceforth $L_1=L_2=0$ we also get
\begin{equation}\label{sameold}
0=\d^2h_i=\left(\eps_{ij}\eta+\varphi_{ij}\right)\wedge\theta^j_0
\end{equation}
with
$$
\eta=\d h +2h\Re(\theta^0_0)+2\eps^{ij}\Re(h_i\theta^0_j)-\frac{1}{2}\eps^{kl}h_{k\bar \imath}\ov{\eps^{ij}K_{js\bar l}\theta^s_0}
$$
and
$$
\varphi_{ij}=\d r_{ij}+r_{ij}\ov{\theta^0_0}-r_{si}\theta^s_j-r_{sj}\theta^s_i-h_lW^l_{ij\bar s}\ov{\theta^s_0}+\frac{1}{2}\ov{\eps^{uv}}\left(h_{i\bar u}\ov{K_{vs\j}}+h_{j\bar u}\ov{K_{vs\bar \imath}}\right)\ov{\theta^s_0} 
$$
where
$$
r_{ij}=-\frac{1}{2}\ov{\eps^{uv}} h_{s\bar u}W^s_{ij\bar v}.
$$
It follows with Cartan's lemma that there are functions $a_{ijk}=a_{ikj}$ such that
$$
\eps_{ij}\eta+\varphi_{ij}=a_{ijk}\theta^k_0.
$$
Since $\varphi_{ij}$ is symmetric in $i,j$, this implies
$$
\eta=\frac{1}{2}\eps^{ji}a_{ijs}\theta^s_0. 
$$
Since $h$ is real-valued, we must have
$$
\eps^{ji}a_{ijs}=\ov{\eps^{uv}}\eps^{kl}h_{k\bar u}K_{ls\bar v}. 
$$
Concluding, we get
$$
\d h=-2h\Re(\theta^0_0)+2\eps^{kl}\Re(h_l\theta^0_k)+\frac{1}{2}\ov{\eps^{ij}}\eps^{kl}\Re(h_{k\bar \imath}K_{ls\j}\theta^s_0).
$$
This completes the prolongation procedure.
\begin{remark}
Note that further integrability conditions can be derived from \eqref{sameold}, we won't write these out though. 
\end{remark}
Using Proposition \ref{upstairsmetri} we obtain:
\begin{theorem}\label{mainshit}
Let $(M,J,[\nabla])$ be a com\-plex pro\-jec\-tive surface with Cartan geometry $(\pi : B \to M,\theta)$. If $U\subset B$ is a connected open set on which there exist functions $h_{i\j}=\ov{h_{j\bar\imath}}$, $h_i$ and $h$ that satisfy the rank $9$ linear system
\begin{equation}\label{difsyscomplete}
\aligned
\d h_{i\j}&=2h_{i\j}\Re(\theta^0_0)+h_{i\bar s}\ov{\theta^{s}_j}+h_{s\j}\theta^s_i+h_i\ov{\eps_{sj}\theta^{s}_0}+\ov{h_j}\eps_{si}\theta^s_0,\\
\d h_k&=2\i h_k\Im(\theta^0_0)+h_l\theta^l_k+h_{k\bar \imath}\ov{\eps^{ij}\theta^0_j}+\left(\eps_{kl}h-\frac{1}{2}\ov{\eps^{ij}} h_{s\bar \imath} W^s_{kl\j}\right)\theta^l_0,\\
\d h&=-2h\Re(\theta^0_0)-2\eps^{lk}\Re(h_l\theta^0_k)+\frac{1}{2}\ov{\eps^{ij}}\eps^{kl}\Re(h_{k\bar \imath}K_{ls\j}\theta^s_0),
\endaligned
\end{equation}
and $(h_{1\bar 1}h_{2 \bar 2}-|h_{1\bar 2}|^2)\neq 0$, then the quadratic form
$$
g=\frac{h_{i\j}\theta^i_0\circ \ov{\theta^{\jmath}_0}}{(h_{1\bar 1}h_{2\bar 2}-|h_{1\bar 2}|^2)^2}
$$
is the $\pi$-pullback to $U$ of a (pseudo-)K\"ahler metric on $\pi(U) \subset M$ that is compatible with $[\nabla]$. 
\end{theorem}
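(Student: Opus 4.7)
The plan is to reduce Theorem~\ref{mainshit} to the converse direction of Proposition~\ref{upstairsmetri}. First I would observe that the first equation of the rank~$9$ system \eqref{difsyscomplete} is literally equation \eqref{difsys}, using $\theta^0_0+\ov{\theta^0_0}=2\Re(\theta^0_0)$. Hence the functions $(h_{i\j},h_i)$ on $U$, together with the nondegeneracy assumption $h_{1\bar1}h_{2\bar2}-|h_{1\bar2}|^2\neq 0$, fulfil exactly the hypotheses of the converse direction of Proposition~\ref{upstairsmetri}.

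Applying that proposition, one obtains from the data on $U$ a well-defined (pseudo-)K\"ahler metric on $\pi(U)$ whose Levi-Civita connection represents $[\nabla]$. The descent of the quadratic form $g=h_{i\j}\theta^i_0\circ\ov{\theta^\jmath_0}/(h_{1\bar1}h_{2\bar2}-|h_{1\bar2}|^2)^2$ from $U$ to $\pi(U)$ is handled inside the proof of Proposition~\ref{upstairsmetri}: one verifies via equation \eqref{suffdifsys} that the form is semi-basic and $P$-invariant, so it is the $\pi$-pullback of a tensor on $M$. That argument is carried out in local holomorphic coordinates using the canonical section $\sigma_z$ and therefore transfers verbatim from $B$ to an arbitrary open subset $U\subset B$, with local charts covering $\pi(U)$.

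The second and third equations of \eqref{difsyscomplete} do not play a direct role in producing the metric; they are the differential consequences derived in the two prolongation steps, coming from $\d^2 h_{i\j}=0$ (which gave $\d h_k$ up to the undetermined real scalar $h$) and $\d^2 h_k\wedge\theta^1_0\wedge\theta^2_0=0$ together with $L_i=0$ from Theorem~\ref{liouvflat} (which pinned down $\d h$). Their presence in the statement is to exhibit \eqref{difsyscomplete} as a closed, Frobenius-type system on a rank-$9$ bundle whose solution space is thereby finite dimensional; on any solution, they are automatic.

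No serious obstacle arises at this stage: the nontrivial analytic content is the computation of the prolongation that produced \eqref{difsyscomplete} together with the converse direction of Proposition~\ref{upstairsmetri}, both already in hand. The only verification required here is the immediate identification of the first line of \eqref{difsyscomplete} with \eqref{difsys} and the observation that localising Proposition~\ref{upstairsmetri} from $B$ to $U$ is routine.
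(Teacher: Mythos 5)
Your proposal is correct and matches the paper's own argument: the paper introduces Theorem~\ref{mainshit} with the single phrase ``Using Proposition~\ref{upstairsmetri} we obtain,'' i.e.\ it likewise applies the converse direction of Proposition~\ref{upstairsmetri} to the first line of \eqref{difsyscomplete} (which is \eqref{difsys} rewritten with $2\Re(\theta^0_0)=\theta^0_0+\ov{\theta^0_0}$), the remaining two equations being the prolongation identities already derived. Your added remarks on localising to $U$ and on the role of the second and third equations are consistent with the paper's preceding computations.
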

From this we get:
\begin{corollary}\label{solflat}
The K\"ahler metrics defined on some domain $U\subset \mathbb{CP}^2$ which are compatible with the standard com\-plex pro\-jec\-tive structure on $\mathbb{CP}^2$ are in one-to-one correspondence with the hermitian forms on $\C^3$ whose rank is at least two. 
\end{corollary}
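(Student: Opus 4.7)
The plan is to reduce Theorem~\ref{mainshit} in the flat model to a representation-theoretic problem.

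First, since the standard complex projective structure on $\mathbb{CP}^2$ is flat, the Cartan bundle is $B=\mathrm{PSL}(3,\C)$ with $\theta$ the Maurer-Cartan form, and the curvature functions vanish identically: $W^i_{kl\j}=K_{ik\j}=L_i=0$. Substituting into \eqref{difsyscomplete}, the rank-$9$ system collapses to the constant-coefficient linear system
\begin{equation*}
\begin{aligned}
\d h_{i\j} &= 2h_{i\j}\Re(\theta^0_0) + h_{i\bar s}\ov{\theta^s_j} + h_{s\j}\theta^s_i + h_i\ov{\eps_{sj}\theta^s_0} + \ov{h_j}\eps_{si}\theta^s_0,\\
\d h_k &= 2\i h_k\Im(\theta^0_0) + h_l\theta^l_k + h_{k\bar\imath}\ov{\eps^{ij}\theta^0_j} + \eps_{kl}h\,\theta^l_0,\\
\d h &= -2h\Re(\theta^0_0) - 2\eps^{lk}\Re(h_l\theta^0_k),
\end{aligned}
\end{equation*}
which is a Frobenius system on $\mathrm{PSL}(3,\C)$ expressed entirely in terms of the Maurer-Cartan components.

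Second, I would assemble the nine real unknowns $(h,h_i,h_{i\j})$ into a single hermitian $3\times 3$ matrix $\mathcal{H}$ on $\mathrm{PSL}(3,\C)$, with $\mathcal{H}_{0\bar 0}$ built from $h$, the off-diagonal entries $\mathcal{H}_{i\bar 0}$ built from appropriate $\eps$-contractions of the $h_i$, and the lower-right $2\times 2$ block $\mathcal{H}_{i\j}$ built from an $\eps$-twist of $h_{i\j}$. With the right choice of signs (determined by matching the coefficients of $\theta^0_k$, $\theta^k_0$ and $\theta^k_l$ in the system), the three scalar equations above should be equivalent to a single matrix identity that encodes the equivariance of $\mathcal{H}$ under the natural action of $\mathrm{SL}(3,\C)$ on $\mathrm{Herm}(\C^3)$. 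The central $\mathbb{Z}/3\subset\mathrm{SL}(3,\C)$ acts trivially (since $|\omega|^2=1$ for cube roots of unity $\omega$), so the equivariance descends to $\mathrm{PSL}(3,\C)$, and any equivariant $\mathcal{H}$ is determined by its value $H_0:=\mathcal{H}(e)\in\mathrm{Herm}(\C^3)$ at the identity. Consequently the solution space of the system is naturally isomorphic to $\mathrm{Herm}(\C^3)$.

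Third, I would interpret the nondegeneracy condition and apply Theorem~\ref{mainshit}. The inequality $h_{1\bar 1}h_{2\bar 2}-|h_{1\bar 2}|^2\neq 0$ at $g\in B$ is precisely the condition that a certain $2\times 2$ block of $\mathcal{H}(g)$ be invertible. Since the $\mathrm{GL}(3,\C)$-action preserves rank, $\mathrm{rank}\,\mathcal{H}(g)=\mathrm{rank}\,H_0$; and a hermitian form of rank $r$ on $\C^3$ has a nondegenerate $2$-dimensional restriction if and only if $r\geq 2$. Hence the nondegeneracy condition holds on a nonempty open, $P$-saturated subset $V\subset B$ if and only if $\mathrm{rank}\,H_0\geq 2$, and Theorem~\ref{mainshit} then produces a $[\nabla]$-compatible (pseudo-)K\"ahler metric on $U:=\pi(V)\subset\mathbb{CP}^2$. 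The converse and the uniqueness of $H_0$ for a given metric follow from Proposition~\ref{upstairsmetri} together with the fact that a linear Frobenius system on the connected bundle $B$ is determined by its value at any single point.

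The main obstacle is the explicit assembly of $\mathcal{H}$ from the system in the second step: identifying the correct signs and $\eps$-contractions requires careful bookkeeping with the trace-free relation $\theta^0_0+\theta^1_1+\theta^2_2=0$ and $\eps$-tensor identities such as $\eps^{ki}\eps_{km}=\delta^i_m$ and $\eps^{lk}\eps_{mi}=\delta^l_m\delta^k_i-\delta^l_i\delta^k_m$, so that the terms involving $\theta^0_k$, $\theta^k_0$ and $\theta^k_l$ match on both sides of the proposed matrix identity. Once this matrix formulation is in place, the corollary follows immediately from representation theory together with Theorem~\ref{mainshit}.
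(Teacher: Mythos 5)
Your proposal is correct and follows essentially the same route as the paper: in the flat case the paper rewrites \eqref{difsyscomplete} as the single matrix equation $\d H+\theta H+H\theta^*=0$ for an explicit hermitian matrix $H$ built from $(h,h_i,h_{i\j})$ with exactly the $\eps$-twists and signs you describe, then uses $\theta=g^{-1}\d g$ to get $H=g^{-1}C(g^{-1})^*$ for a constant hermitian $C$, with the rank-at-least-two condition matching nondegeneracy as you argue. The only difference is that the paper records the matrix explicitly, namely $H=\left(\begin{smallmatrix}h & -\ov{h_2} & \ov{h_1}\\ -h_2 & -h_{22} & h_{21}\\ h_1 & h_{12} & -h_{11}\end{smallmatrix}\right)$, which is precisely the bookkeeping step you deferred.
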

\begin{proof} Suppose the com\-plex pro\-jec\-tive structure $[\nabla]$ has vanishing com\-plex pro\-jec\-tive Weyl and Liouville curvature. Then the differential system \eqref{difsyscomplete} may be written as 
\begin{equation}\label{completeinte}
\d H+\theta H+H\theta^*=0
\end{equation}
with
$$
H=H^*=\left(\begin{array}{ccc}h & -\ov{h_2} & \ov{h_1}\\ -h_2 & -h_{22} & h_{21}\\ h_1 & h_{12} & -h_{11}\end{array}\right)
$$
where ${}^*$ denotes the conjugate transpose matrix. Recall that in the flat case $\theta=g^{-1}\d g$ for some smooth $g: B \to \mathrm{PSL}(3,\C)$, hence the solutions to \eqref{completeinte} are
$$
H=g^{-1}C\left(g^{-1}\right)^*
$$
where $C=C^*$ is a constant hermitian matrix of rank at least two. The statement now follows immediately with Theorem \ref{mainshit}. 
\end{proof}

\begin{remark}
On can deduce from Corollary \ref{solflat} that a K\"ahler metric $g$ giving rise to flat complex projective structures must have constant holomorphic sectional curvature. A result first proved in~\cite{MR0087181} (in all dimensions). 
\end{remark}

\begin{remark}
One can also ask for existence of complex projective structures $[\nabla]$ whose \textit{degree of mobility} is greater than one, i.e.~they admit several (non-proportional) compatible K\"ahler metrics. In~\cite{MR2948791} (see also~\cite{MR2729086}) it was shown that the only closed complex projective manifold with degree of mobility greater than two is $\mathbb{CP}^n$ with the projective structure arising via the Fubini-Study metric.\end{remark}

\subsubsection*{Acknowledgements}
The author is grateful to V. S. Matveev and S. Rosemann for introducing him to the subject of com\-plex pro\-jec\-tive geometry through many stimulating discussions, some of which took place during very enjoyable visits to Friedrich-Schiller-Universit\"at in Jena. The author also would like to thank R. L. Bryant for sharing with him his notes~\cite{bryanthomopro} which contain the proofs of the counterparts for real projective surfaces of Theorem \ref{mainshit} and Corollary \ref{solflat}. Furthermore, the author also would like to thank the referees for several valuable comments. 

\providecommand{\bysame}{\leavevmode\hbox to3em{\hrulefill}\thinspace}
\providecommand{\noopsort}[1]{}
\providecommand{\mr}[1]{\href{http://www.ams.org/mathscinet-getitem?mr=#1}{MR~#1}}
\providecommand{\zbl}[1]{\href{http://www.zentralblatt-math.org/zmath/en/search/?q=an:#1}{Zbl~#1}}
\providecommand{\jfm}[1]{\href{http://www.emis.de/cgi-bin/JFM-item?#1}{JFM~#1}}
\providecommand{\arxiv}[1]{\href{http://www.arxiv.org/abs/#1}{arXiv~#1}}
\providecommand{\doi}[1]{\href{http://dx.doi.org/#1}{DOI}}
\providecommand{\MR}{\relax\ifhmode\unskip\space\fi MR }
\providecommand{\MRhref}[2]{%
  \href{http://www.ams.org/mathscinet-getitem?mr=#1}{#2}
}
\providecommand{\href}[2]{#2}

\end{document}